\newtheorem{palindromes}{Theorem}
\newtheorem{range}[palindromes]{Lemma}
\newtheorem{digits}[palindromes]{Lemma}
\begin{document}

\title[Palindromes in Different Bases]{Palindromes in Different Bases: \\ A Conjecture of J. Ernest Wilkins}

\author{Edray Herber Goins}
\address{Department of Mathematics \\ Purdue University \\ 150 North University Street \\ West Lafayette, IN 47907}
\email{egoins@math.purdue.edu}

\begin{abstract} We show that there exist exactly 203 positive integers $N$ such that for some integer $d \geq 2$ this number is a $d$-digit palindrome base 10 as well as a $d$-digit palindrome for some base $b$ different from 10.  To be more precise, such $N$ range from 22 to 9986831781362631871386899. \end{abstract}
\keywords{palindrome}
\subjclass[2000]{11Y55, 11B99}
\dedicatory{This paper is dedicated to J. Ernest Wilkins.}

\maketitle

\section{Introduction}

During the summer of 2004, while meeting at the Conference for African-American Researchers in the Mathematical Sciences (CAARMS), the author had a short conversation with J. Ernest Wilkins.  He was interested in palindromes which remain palindromes when expressed in a different base system.  For example, 207702 is a 6-digit palindrome expressed in base 10 (written as $[2,0,7,7,0,2]_{10}$) as well as 6-digit palindrome expressed in base 8 (written as $[6,2,5,5,2,6]_8$).  He posed the following question:

\begin{quote} \textit{Does there exist a positive integer $N$ which is an 8-digit palindrome base 10 as well as an 8-digit palindrome for some base $b$ different from 10?}  \end{quote}

\noindent He suspected that the answer would be no, but, without being well-versed in the art of computer programming, could not find a definitive proof using pen and paper.

It is natural to generalize this question to any number of digits.  The main result of this exposition is as follows:

\begin{palindromes} \label{palindromes} There exist exactly 203 positive integers $N$ such that for some integer $d \geq 2$ this number is a $d$-digit palindrome base 10 as well as a $d$-digit palindrome for some base $b$ different from 10.  To be more precise, such $N$ range from 22 to 9986831781362631871386899, and
\[ d = 2, \, 3, \, 4, \, 5, \, 6, \, 7, \, 9, \, 11, \, 13, \, 15, \, 17, \, 19, \, 21, \, 23, \, 25. \] \end{palindromes}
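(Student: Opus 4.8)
The plan is to reduce the statement to a finite verification and then carry that verification out in a way that is both feasible and rigorous. Write a $d$-digit palindrome in base $b$ as $[c_{d-1},\ldots,c_1,c_0]_b$, so that $N=\sum_{i=0}^{d-1}c_i b^i$ with $c_i=c_{d-1-i}$, $0\le c_i\le b-1$ and $c_{d-1}\neq 0$, and likewise write the base-$10$ digits as $a_i$. The first reduction concerns sizes alone: $N$ has $d$ digits in base $b$ precisely when $b^{d-1}\le N<b^{d}$, so for $N$ to have $d$ digits in both base $b$ and base $10$ the intervals $[b^{d-1},b^{d})$ and $[10^{d-1},10^{d})$ must intersect. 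When $b>10$ this forces $b^{d-1}<10^{d}$, and when $b<10$ it forces $10^{d-1}<b^{d}$; taking logarithms in either case yields $d\le 25$, and for each fixed $d$ it confines $b$ to an explicit finite range --- all of $11\le b\le 99$ when $d=2$, shrinking as $d$ grows until, for the largest admissible $d$, only $b=11$ survives. This is the ``range'' lemma, and it leaves only finitely many pairs $(b,d)$ to inspect.

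For each surviving pair the point is not to enumerate all $9\cdot 10^{\lceil d/2\rceil-1}$ base-$10$ palindromes of length $d$, but to exploit how thin the overlap interval is. For instance, with $b=11$ and $d=25$ the interval $[11^{24},10^{25})$ makes up less than two percent of $[10^{24},10^{25})$, which forces the leading base-$11$ digit of $N$ to be $1$ and the leading base-$10$ digits to read $98\ldots$ or $99\ldots$; propagating constraints of this kind through the palindromic symmetry --- the bookkeeping of the two expansions packaged in the ``digits'' lemma --- cuts the number of genuinely free digits down to a computer-searchable handful. When $d$ is even there is a further lever: an even-length palindrome in base $b$ is divisible by $b+1$, since $b^{i}+b^{d-1-i}=b^{i}\bigl(b^{d-1-2i}+1\bigr)$ and the exponent $d-1-2i$ is odd, whence $11\mid N$ and $(b+1)\mid N$. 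In particular, if $d$ is even and $b=9$ then $10\mid N$, i.e.\ $N$ ends in the digit $0$ in base $10$, which is impossible for a base-$10$ palindrome; and in the remaining even cases the joint divisibility by $11$ and by $b+1$ shrinks the candidate list sharply.

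With the search space under control, one treats each admissible pair $(b,d)$ in turn. For $d\le 7$ the lists are short enough to check by hand or with a trivial program; for $8\le d\le 25$ the interval-overlap constraint together with the divisibility constraints (when $d$ is even) leaves only a modest number of cases to test, and the outcome is that no even $d\ge 8$ admits a solution, while every remaining length --- $d\le 7$ together with the odd values of $d$ between $9$ and $25$ --- is realized. Collecting all solutions over all admissible $(b,d)$ produces exactly $203$ integers $N$, the smallest being $22=[2,2]_{10}=[1,1]_{21}$ and the largest $9986831781362631871386899$, which is simultaneously a $25$-digit palindrome in base $10$ and a $25$-digit palindrome in base $11$.

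The hard part is the middle step: turning ``the overlap interval is thin, so most digits are forced'' into a genuinely exhaustive enumeration for the larger odd lengths --- where base $10$ still admits up to $9\cdot 10^{12}$ palindromes a priori --- and into an airtight proof of the negative statement for even $d\ge 8$. The size bounds by themselves do not exclude, say, $d=8$; it is the divisibility observations that must do the real work there, so the delicate point is to verify that the congruence conditions modulo $11(b+1)$, intersected with the narrow admissible interval, genuinely leave nothing --- and, for the odd lengths, that the constrained search has been pushed through to completion.
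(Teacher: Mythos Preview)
Your outline is essentially the paper's strategy: bound $d$ and $b$ by the overlap of $[b^{d-1},b^d)$ with $[10^{d-1},10^d)$, eliminate what you can by divisibility, and finish with an exhaustive computer search. Your $b=9$ observation for even $d$ (that $10\mid N$ forces a leading zero) is a genuine addition; combined with the paper's $b=11$ argument it disposes of $d=14,16,18,20$ with no search at all, since for those lengths only $b\in\{9,11\}$ meets the size constraint. The paper uses only the $b=11$ half of this and leaves $b=9$ to the machine.

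Where you overreach is in the expectation that for the remaining even lengths the congruences modulo $11(b+1)$, intersected with the overlap interval, ``genuinely leave nothing.'' They do not. For $d=8,10,12$ the admissible bases after removing $9$ and $11$ are $\{8,12,13\}$, $\{8,12\}$, and $\{12\}$ respectively, and in each case the congruence only thins the candidate list. For instance, with $d=8$ and $b=8$ one needs an $8$-digit base-$10$ palindrome in $[10^7,8^8)$ divisible by $99$; there are several dozen such $N$, and each must still be tested for palindromicity in base $8$. The nonexistence for even $d\ge 8$ is therefore the output of the search, not a consequence of the congruences alone. The paper makes no stronger claim: it simply runs the search over the ranges supplied by its Lemma~\ref{digits} and reports that nothing turns up for those $d$. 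Your proposal should likewise present the even-$d$ negative results as verified by computation, with the divisibility used only to accelerate it.
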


\noindent (We assume that $d \geq 1$ because any positive integer $N < 10$ is trivially a 1-digit palindrome base $b$ for any $b > N$.) A complete list of these palindromes can be found in the appendix.

The author found this result using a few theoretically trivial yet computationally frustrating inequalities, then parallelized the search using several high powered machines at Purdue University.  Almost all of the computations were done using \texttt{gridMathematica}.  Wilkins conjectured that the only \emph{even} $d$ are $d = 2, \, 4, \, 6$; this result is positive verification of his conjecture.   This article is a bit different from \cite{MR2412782}: Those authors consider positive integers $N = 10^n \pm 1$ which are palindromes base 10 as well as palindromes base 2 -- but the number of digits is not fixed among the bases.

\section{Computational Set-Up}

Fix an integer $b \geq 2$.  For each positive integer $N$, the expression $[ a_d, \dots, \, a_2, \, a_1 ]_b$ will denote the unique base $b$ expansion
\[ N = a_d \, b^{d-1} + \cdots + a_2 \, b + a_1 \qquad \text{where} \qquad 0 \leq a_k < b. \]
\noindent We will call this the \emph{base $b$ representation of $N$}.  Moreover, we will say that $N$ is a \emph{$d$-digit number base $b$} if $a_d$ is nonzero; and that such a $d$-digit number is a \emph{$d$-digit palindrome base $b$} if $a_{d-k+1} = a_k$ for $k = 1, \, 2, \dots, \, d$.  For example, $N = 207702$ may be expressed as $[2,0,7,7,0,2]_{10}$ for $b = 10$, or $[6,2,5,5,2,6]_8$ for $b= 8$.  In particular, $N$ is a 6-digit palindrome base 10 as well as a 6-digit palindrome base 8.

There are only finitely many $d$-digit numbers base 10 which are also $d$-digit numbers for some other base $b$:

\begin{range} \label{range} If $N$ is a positive $d$-digit number base 10 which is also $d$-digit number for some base $b \neq 10$ then $d \leq 26$. \end{range}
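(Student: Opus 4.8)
The plan is to use the elementary observation that a positive integer is an $n$-digit number base $c$ precisely when it lies in the half-open interval $[\,c^{\,n-1},\, c^{\,n}\,)$, and then to force the base $10$ interval and the base $b$ interval to overlap. Since $N$ is a $d$-digit number base $10$ we have $10^{d-1} \le N < 10^{d}$, and since $N$ is a $d$-digit number base $b$ we have $b^{d-1} \le N < b^{d}$. I would then split into the two cases $b \le 9$ and $b \ge 11$, the case $b = 10$ being excluded by hypothesis.

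If $b \le 9$, then combining the base $b$ upper bound with the base $10$ lower bound gives $10^{d-1} \le N < b^{d} \le 9^{d}$, hence $10^{d-1} < 9^{d}$, i.e.\ $(10/9)^{d-1} < 9$; taking logarithms yields $d - 1 < \log 9 / \log(10/9) \approx 20.9$, so $d \le 21$. If $b \ge 11$, then combining the base $b$ lower bound with the base $10$ upper bound gives $11^{d-1} \le b^{d-1} \le N < 10^{d}$, hence $11^{d-1} < 10^{d}$, i.e.\ $(11/10)^{d-1} < 10$; taking logarithms yields $d - 1 < \log 10 / \log(11/10) \approx 24.2$, so $d \le 25$. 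Taking the larger of the two bounds gives $d \le 25$, which is even slightly stronger than the claimed $d \le 26$.

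There is no genuine obstacle here; once set up correctly the proof is essentially a single inequality. The only points requiring a moment's care are: (i) recognizing that throughout the infinite range $2 \le b \le 9$ one need only use the crude estimate $b^{d} \le 9^{d}$, and throughout $b \ge 11$ only the estimate $b^{d-1} \ge 11^{d-1}$, so that a single comparison disposes of all bases on each side of $10$ at once; and (ii) carrying out the two logarithm estimates accurately enough to land on the correct integer. If one wishes to avoid logarithms altogether, it suffices instead to verify the explicit numerical facts $10^{21} > 9^{22}$ and $11^{25} > 10^{26}$ together with the monotonicity of $(10/9)^{n}$ and $(11/10)^{n}$ in $n$.
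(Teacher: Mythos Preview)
Your argument is correct, and in fact yields the sharper bound $d \le 25$ rather than the paper's $d \le 26$. The route, however, is organized differently from the paper's. The paper works with the identity $d = \lfloor \log N / \log b \rfloor + 1$ and the elementary bounds $x \le \lfloor x \rfloor + 1 \le x + 1$ to obtain an explicit window
\[
10^{1/(1 + \log 10 / \log N)} \le b \le 10^{1/(1 - \log 10 / \log N)},
\]
and then observes that once $N \ge 10^{26}$ this window contains only the single integer $b = 10$. You instead use the interval characterization $c^{d-1} \le N < c^d$ directly, split into the cases $b \le 9$ and $b \ge 11$, and compare the extremal bases $9$ and $11$ against $10$.

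Your approach is a touch more elementary and sharper on the final numerics; the paper's approach, by contrast, produces the inequality on $b$ as a function of $N$, which is precisely what is later fed into the \texttt{Range} command in the search algorithm. So the paper's formulation is doing double duty, while yours is tailored to this single lemma.
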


\begin{proof} (Wilkins himself suggested this proof.) Upon fixing such an integer $N$, the base $b$ must satisfy
\[ \left \lfloor \frac {\log N}{\log 10} \right \rfloor + 1 = d = \left \lfloor \frac {\log N}{\log b} \right \rfloor + 1 \]
\noindent in terms of the greatest integer function $\lfloor \cdot \rfloor$.  In fact, given any real number $x$ we have the inequality $x \leq \lfloor x \rfloor + 1 \leq x + 1$, so it is easy to see that
\[ -1 \leq \frac {\log N}{\log 10} - \frac {\log N}{\log b} \leq 1 \qquad \implies \qquad 10^{1/(1 + \log 10/\log N)} \leq b \leq 10^{1/(1 - \log 10/\log N)}. \]
\noindent When $N \geq 10^{26}$, i.e., $d > 26$, this forces $b = 10$. \end{proof}

The following result gives computable ranges for $N$:

\begin{digits} \label{digits} Say that $N$ is a positive integer $N$ which is both a $d$-digit palindrome base 10 as well as a $d$-digit palindrome for some base $b \neq 10$.  Then either $d \leq 14$, or else $d$, $b$, and $N$ are related as in the following table:
\vskip .2in
\tablehead{\hline Digits $d$ & Base $b$ & Range for $N$ \\ \hline \hline}
\tabletail{\hline}
\begin{center} \begin{supertabular}{|c|c|c|}
15 & 9 & $10^{14} < N < 9^{15}$ \\ & 11 & $11^{14} < N < 10^{15}$ \\ \hline
16 & 9 & $10^{15} < N < 9^{16}$ \\ \hline
17 & 9 & $10^{16} < N < 9^{17}$ \\ & 11 & $11^{16} < N < 10^{17}$ \\ \hline
18 & 9 & $10^{17} < N < 9^{18}$ \\ \hline
19 & 9 & $10^{18} < N < 9^{19}$ \\ & 11 & $11^{18} < N \leq 10^{19}$ \\ \hline
20 & 9 & $10^{19} < N < 9^{20}$ \\ \hline
21 & 9 & $10^{20} < N < 9^{21}$ \\ & 11 & $11^{20} < N \leq 10^{21}$ \\ \hline
23 & 11 & $11^{22} < N < 10^{23}$ \\ \hline
25 & 11 & $11^{24} < N < 10^{25}$ \\
\end{supertabular} \end{center}
\end{digits}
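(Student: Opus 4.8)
The plan is to convert the two digit-count conditions into a pair of elementary size inequalities, solve those for each admissible number of digits $d$ to cut the base $b$ down to finitely many possibilities, and then use a short divisibility argument to discard the one spurious case that survives. Throughout I may assume $d \ge 15$, since for $d \le 14$ the first alternative of the statement already holds.

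First I would recall that $N$ is a $d$-digit number in a base $c$ precisely when $c^{d-1} \le N < c^{d}$. Applying this with $c = 10$ and with $c = b$, the integer $N$ must lie in the intersection
\[ [\,10^{d-1},\,10^{d}\,) \,\cap\, [\,b^{d-1},\,b^{d}\,), \]
so this set is nonempty. If $b \le 9$ the powers of $b$ are dominated by the corresponding powers of $10$, the intersection is just $[\,10^{d-1},\,b^{d}\,)$, and nonemptiness reduces to the single condition $10^{d-1} < b^{d}$; if $b \ge 11$ the intersection is $[\,b^{d-1},\,10^{d}\,)$ and the condition is $b^{d-1} < 10^{d}$. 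For fixed $b$ each of these conditions is monotone in $d$ (it only gets harder as $d$ grows), so it cuts out a finite range of $d$, and for each $d \ge 15$ at most one value of $b$ on each side of $10$ survives. Running through $d = 15, \dots, 26$ --- the upper bound coming from Lemma \ref{range} --- and reading off the (strict) interval endpoints yields: $b \in \{9, 11\}$, with ranges $10^{d-1} < N < 9^{d}$ and $11^{d-1} < N < 10^{d}$, for $15 \le d \le 21$; only $b = 11$, with $11^{d-1} < N < 10^{d}$, for $d = 23$ and $d = 25$; and no admissible $b$ whatsoever for $d = 22, 24, 26$. (The left endpoint is strict because $c^{d-1} = [\,1, 0, \dots, 0\,]_{c}$ is not a palindrome; the right endpoint is recorded in the table.)

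It remains to delete the base $b = 11$ in the even cases $d = 16, 18, 20$. Here I would invoke the elementary fact that an even-length palindrome in any base $c$ is divisible by $c + 1$: writing such a palindrome as $\sum_{1 \le k \le d/2} a_{k}\, c^{k-1}\,(1 + c^{\,d-2k+1})$ with $a_{k} = a_{d+1-k}$, each factor $1 + c^{\,d-2k+1}$ is divisible by $c + 1$ because the exponent $d - 2k + 1$ is odd and $c \equiv -1 \pmod{c+1}$. So a $d$-digit palindrome base $10$ with $d$ even is a multiple of $11$; but if $N$ were simultaneously a $d$-digit palindrome base $11$, then its base-$11$ units digit would equal its base-$11$ leading digit, which is nonzero, forcing $N \not\equiv 0 \pmod{11}$ --- a contradiction. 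This removes $b = 11$ whenever $d$ is even, and what is left is exactly the table.

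The conceptually essential step is this last divisibility observation: no size estimate alone excludes $b = 11$ for even $d$, because the interval $[\,11^{d-1},\,10^{d}\,)$ is genuinely nonempty for $d = 16, 18, 20$. The merely routine but delicate part is instead the case analysis of the second paragraph, where the thresholds are remarkably tight: the base-$9$ cut-off between $d = 21$ and $d = 22$ rests on the pair of inequalities $10^{20} < 9^{21}$ and $9^{22} < 10^{21}$, while the base-$11$ cut-off between $d = 25$ and $d = 26$ rests on $11^{24} < 10^{25}$ and $10^{26} < 11^{25}$; the margins are slim enough that one must know $\log 3 / \log 10$ and $\log 11 / \log 10$ to several decimal places to be certain which way each inequality points.
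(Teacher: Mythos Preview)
Your approach is the paper's: bound the base $b$ by size inequalities, then use the fact that an even-length palindrome in base $c$ is divisible by $c+1$ to eliminate $b=11$ whenever $d$ is even. However, the case analysis in your second paragraph contains a slip. You assert that ``no admissible $b$ whatsoever'' survives the size test for $d = 22, 24, 26$. This is true for $d = 26$ (since $11^{25} > 10^{26}$, as you yourself note in the last paragraph), but it is \emph{false} for $d = 22$ and $d = 24$: one computes $11^{21} \approx 7.40 \times 10^{21} < 10^{22}$ and $11^{23} \approx 8.95 \times 10^{23} < 10^{24}$, so $b = 11$ passes the size test in both of those cases. Indeed, the inequality $11^{d-1} < 10^{d}$ holds for every $d \le 25$, not just for $d \le 21$ and $d \in \{23,25\}$; your own threshold computation in the final paragraph confirms the cut-off is between $d=25$ and $d=26$.

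The repair is immediate and already in your hands: the divisibility argument you give in the third paragraph is phrased for arbitrary even $d$, so simply apply it to $d = 16, 18, 20, 22, 24$ rather than only to $d = 16, 18, 20$. This is precisely what the paper does: size considerations force $9 \le b \le 11$ for $15 \le d \le 21$ and $10 \le b \le 11$ for $22 \le d \le 26$, and then the observation $11 \mid N$ disposes of $b = 11$ for every even $d$, in particular for $d = 22, 24, 26$.
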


\begin{proof} According to Lemma \ref{range}, it suffices to consider those integers $N$ satisfying the double inequality $10^1 < N < 10^{26}$.  Recall that
\[ d = \left \lfloor \frac {\log N}{\log b} \right \rfloor + 1. \]
\noindent When $15 \leq d \leq 21$, this forces $9 \leq b \leq 11$; and when $22 \leq d \leq 26$, this forces $10 \leq b \leq 11$.     It suffices then to show that $d \neq 22, \, 24, \, 26$.  Following an observation of Wilkins, we see that no $d$-digit palindrome base 10 can also be a $d$-digit palindrome base 11 when $d$ is even:  Indeed, write $[ c_d, \dots, \, c_2, \, c_1 ]_{10}$ and $[ a_d, \dots, \, a_2, \, a_1 ]_{11}$ as the base 10 and base 11 representations of $N$, respectively, where the leading coefficient satisfies $0 < a_d < 11$.  Then we find
\begin{equation} \begin{aligned} a_d & = a_1 \equiv a_d \cdot 11^{d-1} + a_{d-1} \cdot 11^{d-2} + \cdots + a_2 \cdot 11 + a_1 \pmod{11} \\ & = N = c_d \cdot 10^{d-1} + c_{d-1} \cdot 10^{d-2} + \cdots + c_2 \cdot 10 + c_1 \\ & \equiv 0 + \cdots + (c_{d-1} - c_2) - (c_d - c_1)  \pmod{11} \\ & \equiv 0 \pmod{11} \end{aligned} \end{equation}
\noindent which is a contradiction. \end{proof}

\section{Implementation}

Here is the actual \texttt{Mathematica} code.  Given a pair of $d$-digit integers $\{ N_1, N_2 \}$ and a base $b$, the output is a list of $d$-digit palindromes base $b$ in the range $N_1 \leq N < N_2$ which are palindromes for some base different from $b$.  In practice, we set $N_1 = 10$, $N_2 = 10^{26}$, and $b = 10$ -- although for large enough $N$ it seems computationally more efficient to set $b=11$.  The built-in \textsf{Mathematica} command \texttt{RealDigits[N,b]} returns $\{ \{ a_d, \dots, \, a_1 \}, d\}$, as related to the base $b$ expansion $N = a_d \, b^{d-1} + \cdots + a_2 \, b + a_1$; while \texttt{FromDigits[list, b]} undoes this command and returns $N$.

\begin{verbatim} PalindromeSearch[{N1_Integer, N2_Integer, b_Integer}] := 
   Module[{d, FoundList, TestPalindrome, BaseList},
   
      d = RealDigits[N1,b][[2]]; (* number of digits *)
      FoundList = {}; (* list of found palindromes *)
      
      For[ 
         n = Floor[ N1/b^Floor[d/2] ], 
         n < Floor[ N2/b^Floor[d/2] ], 
         n++,
         (* n denotes the first d/2 digits of the palindrome *)
         
         TestPalindrome = FromDigits[Join[
            RealDigits[n,b][[1]], 
            Take[ Reverse[RealDigits[n,b][[1]]], -Floor[d/2] ]
         ],b];
         (* reconstructs the d-digit palindrome from n *)
         
         BaseList = Select[
            Range[
               Ceiling[ b^(1/(1+Log[TestPalindrome,b])) ],
               Floor[ b^(1/(1-Log[TestPalindrome,b])) ]
            ],
            RealDigits[TestPalindrome,#][[1]] == 
            Reverse[ RealDigits[TestPalindrome,#][[1]] ]
            &&
            RealDigits[TestPalindrome,#][[2]] == d &
         ];
         (* a list of bases for which TestPalindrome is also a
         d-digit palindrome *)
         
         If[
            Length[BaseList] > 1,
            AppendTo[ FoundList, {TestPalindrome, BaseList} ];
            (* if base is not b, add to list *)
         ];
                  
      ];
  
      Return[FoundList]; (* return complete list *)
   ]
\end{verbatim}

\section{Enumeration of Palindromes}

Lemma \ref{digits} gives effective computing ranges.    The first $d \leq 14$ digits took about a day.    These last $15 \leq d < 26$ took about fifteen months running on twenty processors each -- for a total of about twelve years computing time!  The results form the basis of Theorem \ref{palindromes} and the table below.  Recall that $[ a_d, \dots, \, a_2, \, a_1 ]_b$ denotes the expansion $N = a_d \, b^{d-1} + \cdots + a_2 \, b + a_1$.

\vskip .2in
\begin{center}
\scriptsize
\tablefirsthead{\hline Digits $d$ & Integer $N$ & Base $b$ Representations \\ \hline \hline}
\tablehead{\hline \multicolumn{3}{|l|}{\small\sl continued from previous page} \\ \hline Digits $d$ & Integer $N$ & Base $b$ Representations \\ \hline \hline}
\tabletail{\hline \hline \multicolumn{3}{|r|}{\small\sl continued on the next page} \\ \hline}
\tablelasttail{\hline}
\begin{supertabular}{|c|c|c|}
2 & 22 & $[2,2]_{10}$, $[1,1]_{21}$ \\ & 33 & $[3,3]_{10}$, $[1,1]_{32}$ \\ & 44 & $[4,4]_{10}$, $[2,2]_{21}$, $[1,1]_{43}$ \\ & 55 & $[5,5]_{10}$, $[1,1]_{54}$ \\ & 66 & $[6,6]_{10}$, $[3,3]_{21}$, $[2,2]_{32}$, $[1,1]_{65}$ \\ & 77 & $[7,7]_{10}$, $[1,1]_{76}$ \\ & 88 & $[8,8]_{10}$, $[4,4]_{21}$, $[2,2]_{43}$, $[1,1]_{87}$ \\ & 99 & $[9,9]_{10}$, $[3,3]_{32}$, $[1,1]_{98}$ \\ \hline
3 & 111& $[3,0,3]_6$, $[1,1,1]_{10}$ \\ & 121 & $[2,3,2]_7$, $[1,7,1]_8$, $[1,2,1]_{10}$ \\ & 141 & $[3,5,3]_6$, $[1,4,1]_{10}$ \\ & 171 & $[3,3,3]_7$, $[1,7,1]_{10}$ \\ & 181 & $[1,8,1]_{10}$, $[1,3,1]_{12}$\\ & 191 & $[5,1,5]_6$, $[2,3,2]_9$, $[1,9,1]_{10}$ \\ & 222 & $[2,2,2]_{10}$, $[1,4,1]_{13}$ \\ & 232 & $[2,3,2]_{10}$, $[1,10,1]_{11}$ \\ & 242 & $[4,6,4]_7$, $[2,4,2]_{10}$ \\ & 282 & $[3,4,3]_9$, $[2,8,2]_{10}$ \\ & 292 & $[5,6,5]_7$, $[4,4,4]_8$, $[2,9,2]_{10}$ \\ & 313 & $[3,1,3]_{10}$, $[1,11,1]_{13}$ \\ & 323 & $[3,2,3]_{10}$, $[1,9,1]_{14}$ \\ & 333 & $[5,1,5]_8$, $[3,3,3]_{10}$ \\ & 343 & $[3,4,3]_{10}$, $[2,9,2]_{11}$, $[1,1,1]_{18}$ \\ & 353 & $[3,5,3]_{10}$, $[2,1,2]_{13}$, $[1,6,1]_{16}$ \\ & 373 & $[5,6,5]_8$, $[4,5,4]_9$, $[3,7,3]_{10}$ \\ & 414 & $[6,3,6]_8$, $[4,1,4]_{10}$ \\ & 444 & $[4,4,4]_{10}$, $[2,8,2]_{13}$ \\ & 454 & $[4,5,4]_{10}$, $[3,8,3]_{11}$ \\ & 464 & $[5,6,5]_9$, $[4,6,4]_{10}$, $[2,5,2]_{14}$ \\ & 484 & $[4,8,4]_{10}$, $[1,2,1]_{21}$ \\ & 494 & $[4,9,4]_{10}$, $[1,12,1]_{17}$ \\ & 505 & $[5,0,5]_{10}$, $[1,10,1]_{18}$, $[1,3,1]_{21}$ \\ & 545 & $[5,4,5]_{10}$, $[1,15,1]_{17}$ \\ & 555 & $[6,7,6]_9$, $[5,5,5]_{10}$, $[3,10,3]_{12}$ \\ & 565 & $[5,6,5]_{10}$, $[4,7,4]_{11}$ \\ & 575 & $[5,7,5]_{10}$, $[3,5,3]_{13}$ \\ & 595 & $[5,9,5]_{10}$, $[1,15,1]_{18}$, $[1,5,1]_{22}$ \\ & 616 & $[6,1,6]_{10}$, $[4,3,4]_{12}$ \\ & 626 & $[6,2,6]_{10}$, $[2,7,2]_{16}$, $[1,0,1]_{25}$ \\ & 646 & $[7,8,7]_{9}$, $[6,4,6]_{10}$ \\ & 656 & $[8,0,8]_9$, $[6,5,6]_{10}$ \\ & 666 & $[6,6,6]_{10}$, $[3,12,3]_{13}$, $[1,16,1]_{19}$ \\ & 676 & $[6,7,6]_{10}$, $[5,6,5]_{11}$, $[4,8,4]_{12}$, $[1,2,1]_{25}$ \\ & 686 & $[6,8,6]_{10}$, $[2,2,2]_{18}$ \\ & 717 & $[7,1,7]_{10}$, $[3,9,3]_{14}$ \\ & 727 & $[7,2,7]_{10}$, $[1,11,1]_{22}$ \\ & 737 & $[7,3,7]_{10}$, $[5,1,5]_{12}$, $[1,9,1]_{23}$ \\ & 757 & $[7,5,7]_{10}$, $[1,15,1]_{21}$, $[1,1,1]_{27}$ \\ & 767 & $[7,6,7]_{10}$, $[2,11,2]_{17}$ \\ & 787 & $[7,8,7]_{10}$, $[6,5,6]_{11}$, $[3,1,3]_{16}$ \\ & 797 & $[7,9,7]_{10}$, $[5,6,5]_{12}$, $[4,9,4]_{13}$ \\ & 818 & $[8,1,8]_{10}$, $[2,14,2]_{17}$ \\ & 828 & $[8,2,8]_{10}$, $[3,10,3]_{15}$ \\ & 838 & $[8,3,8]_{10}$, $[2,6,2]_{19}$, $[1,4,1]_{27}$ \\ & 848 & $[8,4,8]_{10}$, $[2,11,2]_{18}$ \\ & 858 & $[8,5,8]_{10}$, $[4,5,4]_{14}$, $[3,12,3]_{15}$ \\ & 888 & $[8,8,8]_{10}$, $[3,14,3]_{15}$ \\ & 898 & $[8,9,8]_{10}$, $[7,4,7]_{11}$, $[1,16,1]_{23}$ \\ & 909 & $[9,0,9]_{10}$, $[7,5,7]_{11}$ \\ & 919 & $[9,1,9]_{10}$, $[4,1,4]_{15}$, $[1,7,1]_{27}$ \\ & 929 & $[9,2,9]_{10}$, $[1,3,1]_{29}$ \\ & 949 & $[9,4,9]_{10}$, $[4,3,4]_{15}$\\ & 979 & $[9,7,9]_{10}$, $[4,5,4]_{15}$, $[3,13,3]_{16}$ \\ & 989 & $[9,8,9]_{10}$, $[3,7,3]_{17}$, $[2,5,2]_{21}$, $[1,12,1]_{26}$ \\ & 999 & $[9,9,9]_{10}$, $[5,1,5]_{14}$ \\ \hline
4 & 3663 & $[7,1,1,7]_8$, $[3,6,6,3]_{10}$ \\ & 6776 & $[6,7,7,6]_{10}$, $[3,1,1,3]_{13}$ \\ & 8008 & $[8,0,0,8]_{10}$, $[4,7,7,4]_{12}$ \\ & 8778 & $[8,7,7,8]_{10}$, $[3,12,12,3]_{13}$ \\ \hline
5 & 13131 & $[3,1,5,1,3]_8$, $[1,3,1,3,1]_{10}$ \\ & 13331 & $[3,2,0,2,3]_8$, $[1,3,3,3,1]_{10}$ \\ & 16561 & $[6,6,1,6,6]_7$, $[1,6,5,6,1]_{10}$ \\ & 25752 & $[3,8,2,8,3]_9$, $[2,5,7,5,2]_{10}$ \\ & 26462 & $[6,3,5,3,6]_8$, $[2,6,4,6,2]_{10}$ \\ & 26662 & $[6,4,0,4,6]_8$, $[2,6,6,6,2]_{10}$ \\ & 26962 & $[2,6,9,6,2]_{10}$, $[1,9,2,9,1]_{11}$ \\ & 27472 & $[4,1,6,1,4]_9$, $[2,7,4,7,2]_{10}$ \\ & 30103 & $[7,2,6,2,7]_8$, $[3,0,1,0,3]_{10}$ \\ & 30303 & $[7,3,1,3,7]_8$, $[3,0,3,0,3]_{10}$ \\ & 35953 & $[3,5,9,5,3]_{10}$, $[1,8,9,8,1]_{12}$ \\ & 38183 & $[3,8,1,8,3]_{10}$, $[1,8,9,8,1]_{11}$ \\ & 39593 & $[3,9,5,9,3]_{10}$, $[1,0,6,0,1]_{14}$ \\ & 40504 & $[4,0,5,0,4]_{10}$, $[2,8,4,8,2]_{11}$ \\ & 42324 & $[6,4,0,4,6]_9$, $[4,2,3,2,4]_{10}$ \\ & 43934 & $[4,3,9,3,4]_{10}$, $[2,1,5,1,2]_{12}$ \\ & 49294 & $[4,9,2,9,4]_{10}$, $[3,4,0,4,3]_{11}$ \\ & 50605 & $[7,6,3,6,7]_9$, $[5,0,6,0,5]_{10}$ \\ & 52825 & $[5,2,8,2,5]_{10}$, $[3,6,7,6,3]_{11}$ \\ & 56265 & $[5,6,2,6,5]_{10}$, $[1,12,7,12,1]_{13}$ \\ & 59095 & $[5,9,0,9,5]_{10}$, $[1,7,7,7,1]_{14}$ \\ & 60106 & $[6,0,1,0,6]_{10}$, $[1,2,12,2,1]_{15}$ \\ & 63936 & $[6,3,9,3,6]_{10}$, $[4,4,0,4,4]_{11}$ \\ & 67576 & $[6,7,5,7,6]_{10}$, $[1,5,0,5,1]_{15}$ \\ & 75157 & $[7,5,1,5,7]_{10}$, $[5,1,5,1,5]_{11}$ \\ & 88888 & $[8,8,8,8,8]_{10}$, $[4,3,5,3,4]_{12}$ \\ & 90209 & $[9,0,2,0,9]_{10}$, $[1,6,0,6,1]_{16}$ \\ & 94049 & $[9,4,0,4,9]_{10}$, $[1,6,15,6,1]_{16}$ \\ & 94249 & $[9,4,2,4,9]_{10}$, $[1,2,3,2,1]_{17}$ \\ & 96369 & $[9,6,3,6,9]_{10}$, $[1,7,8,7,1]_{16}$ \\ & 98689 & $[9,8,6,8,9]_{10}$, $[1,8,1,8,1]_{16}$ \\ \hline
6 & 207702 & $[6,2,5,5,2,6]_8$, $[2,0,7,7,0,2]_{10}$ \\ & 546645 & $[5,4,6,6,4,5]_{10}$, $[1,0,3,3,0,1]_{14}$ \\ & 646646 & $[6,4,6,6,4,6]_{10}$, $[2,7,2,2,7,2]_{12}$ \\ \hline
7 & 1496941 & $[5, 5, 5, 3, 5, 5, 5]_8$, $[1,4,9,6,9,4,1]_{10}$ \\ & 1540451 & $[2, 8, 0, 7, 0, 8, 2]_9$, $[1,5,4,0,4,5,1]_{10}$ \\ & 1713171 & $[3, 2, 0, 1, 0, 2, 3]_9$, $[1,7,1,3,1,7,1]_{10}$ \\ & 1721271 & $[3, 2, 1, 3, 1, 2, 3]_9$, $[1,7,2,1,2,7,1]_{10}$ \\ & 1828281 & $[3, 3, 8, 5, 8, 3, 3]_9$, $[1,8,2,8,2,8,1]_{10}$ \\ & 1877781 & $[3, 4, 7, 1, 7, 4, 3]_9$, $[1,8,7,7,7,8,1]_{10}$ \\ & 1885881 & $[3, 4, 8, 3, 8, 4, 3]_9$, $[1,8,8,5,8,8,1]_{10}$ \\ & 1935391 & $[7, 3, 0, 4, 0, 3, 7]_8$, $[1,9,3,5,3,9,1]_{10}$ \\ & 1970791 & $[7, 4, 1, 1, 1, 4, 7]_8$, $[1,9,7,0,7,9,1]_{10}$ \\ & 2401042 & $[4, 4, 5, 8, 5, 4, 4]_9$, $[2,4,0,1,0,4,2]_{10}$ \\ & 2434342 & $[4, 5, 2, 0, 2, 5, 4]_9$, $[2,4,3,4,3,4,2]_{10}$ \\ & 2442442 & $[4, 5, 3, 2, 3, 5, 4]_9$, $[2,4,4,2,4,4,2]_{10}$ \\ &  2450542 & $[4, 5, 4, 4, 4, 5, 4]_9$, $[2,4,5,0,5,4,2]_{10}$ \\ & 2956592 & $[2,9,5,6,5,9,2]_{10}$, $[1, 7, 3, 10, 3, 7, 1]_{11}$ \\ & 2968692 & $[2,9,6,8,6,9,2]_{10}$, $[1, 7, 4, 8, 4, 7, 1]_{11}$ \\ & 3106013 & $[5, 7, 5, 3, 5, 7, 5]_9$, $[3,1,0,6,0,1,3]_{10}$ \\ & 3114113 & $[5, 7, 6, 5, 6, 7, 5]_9$, $[3,1,1,4,1,1,3]_{10}$ \\ & 3122213 & $[5, 7, 7, 7, 7, 7, 5]_9$, $[3,1,2,2,2,1,3]_{10}$ \\ & 3163613 & $[5, 8, 5, 1, 5, 8, 5]_9$, $[3,1,6,3,6,1,3]_{10}$ \\ & 3171713 & $[5, 8, 6, 3, 6, 8, 5]_9$, $[3,1,7,1,7,1,3]_{10}$ \\ & 3192913 & $[3,1,9,2,9,1,3]_{10}$, $[1, 0, 9, 11, 9, 0, 1]_{12}$ \\ & 3262623 & $[3,2,6,2,6,2,3]_{10}$, $[1, 9, 2, 9, 2, 9, 1]_{11}$ \\ & 3274723 & $[3,2,7,4,7,2,3]_{10}$, $[1, 9, 3, 7, 3, 9, 1]_{11}$ \\ & 3286823 & $[3,2,8,6,8,2,3]_{10}$, $[1, 9, 4, 5, 4, 9, 1]_{11}$ \\ & 3298923 & $[3,2,9,8,9,2,3]_{10}$, $[1, 9, 5, 3, 5, 9, 1]_{11}$ \\ & 3303033 & $[6, 1, 8, 3, 8, 1, 6]_9$, $[3,3,0,3,0,3,3]_{10}$ \\ & 3360633 & $[6, 2, 8, 1, 8, 2, 6]_9$, $[3,3,6,0,6,3,3]_{10}$, $[1, 9, 9, 5, 9, 9, 1]_{11}$ \\ & 3372733 & $[3,3,7,2,7,3,3]_{10}$, $[1, 9, 10, 3, 10, 9, 1]_{11}$ \\ & 4348434 & $[4,3,4,8,4,3,4]_{10}$, $[2, 5, 0, 0, 0, 5, 2]_{11}$ \\ & 4410144 & $[4,4,1,0,1,4,4]_{10}$, $[2, 5, 4, 2, 4, 5, 2]_{11}$ \\ & 4422244 & $[4,4,2,2,2,4,4]_{10}$, $[2, 5, 5, 0, 5, 5, 2]_{11}$ \\ & 4581854 & $[4,5,8,1,8,5,4]_{10}$, $[2, 6, 4, 10, 4, 6, 2]_{11}$ \\ & 4593954 & $[4,5,9,3,9,5,4]_{10}$, $[2, 6, 5, 8, 5, 6, 2]_{11}$ \\ & 5641465 & $[5,6,4,1,4,6,5]_{10}$, $[1, 10, 8, 0, 8, 10, 1]_{12}$ \\ & 5643465 & $[5,6,4,3,4,6,5]_{10}$, $[3, 2, 0, 5, 0, 2, 3]_{11}$ \\ & 5655565 & $[5,6,5,5,5,6,5]_{10}$, $[3, 2, 1, 3, 1, 2, 3]_{11}$ \\ & 5667665 & $[5,6,6,7,6,6,5]_{10}$, $[3, 2, 2, 1, 2, 2, 3]_{11}$ \\ & 5741475 & $[5,7,4,1,4,7,5]_{10}$, $[3, 2, 7, 1, 7, 2, 3]_{11}$ \\ & 7280827 & $[7,2,8,0,8,2,7]_{10}$, $[4, 1, 2, 3, 2, 1, 4]_{11}$ \\ & 7292927 & $[7,2,9,2,9,2,7]_{10}$, $[4, 1, 3, 1, 3, 1, 4]_{11}$ \\ & 8364638 & $[8,3,6,4,6,3,8]_{10}$, $[2, 9, 7, 4, 7, 9, 2]_{12}$ \\ & 8710178 & $[8,7,1,0,1,7,8]_{10}$, $[4, 10, 0, 10, 0, 10, 4]_{11}$ \\ & 8722278 & $[8,7,2,2,2,7,8]_{10}$, $[4, 10, 1, 8, 1, 10, 4]_{11}$ \\ & 8734378 & $[8,7,3,4,3,7,8]_{10}$, $[4, 10, 2, 6, 2, 10, 4]_{11}$ \\ & 8746478 & $[8,7,4,6,4,7,8]_{10}$, $[4, 10, 3, 4, 3, 10, 4]_{11}$ \\ & 8758578 & $[8,7,5,8,5,7,8]_{10}$, $[4, 10, 4, 2, 4, 10, 4]_{11}$ \\ & 8820288 & $[8,8,2,0,2,8,8]_{10}$, $[4, 10, 8, 4, 8, 10, 4]_{11}$ \\ & 8832388 & $[8,8,3,2,3,8,8]_{10}$, $[4, 10, 9, 2, 9, 10, 4]_{11}$ \\ & 8844488 & $[8,8,4,4,4,8,8]_{10}$, $[4, 10, 10, 0, 10, 10, 4]_{11}$ \\ & 8864688 & $[8,8,6,4,6,8,8]_{10}$, $[1, 10, 11, 4, 11, 10, 1]_{13}$ \\ & 9046409 & $[9,0,4,6,4,0,9]_{10}$, $[1, 2, 11, 6, 11, 2, 1]_{14}$ \\ & 9578759 & $[9,5,7,8,7,5,9]_{10}$, $[1, 3, 11, 4, 11, 3, 1]_{14}$ \\ & 9813189 & $[9,8,1,3,1,8,9]_{10}$, $[1, 4, 3, 6, 3, 4, 1]_{14}$ \\ & 9963699 & $[9,9,6,3,6,9,9]_{10}$, $[3, 4, 0, 6, 0, 4, 3]_{12}$ \\ \hline
9 & 130535031 & $[7, 6, 1, 7, 4, 7, 1, 6, 7]_8$, $[1,3,0,5,3,5,0,3,1]_{10}$ \\ & 167191761 & $[3, 7, 8, 5, 3, 5, 8, 7, 3]_9$, $[1,6,7,1,9,1,7,6,1]_{10}$ \\ & 181434181 & $[4, 1, 8, 3, 5, 3, 8, 1, 4]_9$, $[1,8,1,4,3,4,1,8,1]_{10}$ \\ & 232000232 & $[5, 3, 4, 4, 8, 4, 4, 3, 5]_9$, $[2,3,2,0,0,0,2,3,2]_{10}$ \\ & 356777653 & $[3,5,6,7,7,7,6,5,3]_{10}$, $[1, 7, 3, 4, 3, 4, 3, 7, 1]_{11}$ \\ & 362151263 & $[3,6,2,1,5,1,2,6,3]_{10}$, $[1, 7, 6, 4, 7, 4, 6, 7, 1]_{11}$ \\ & 382000283 & $[8, 7, 7, 7, 1, 7, 7, 7, 8]_9$, $[3,8,2,0,0,0,2,8,3]_{10}$ \\ & 489525984 & $[4,8,9,5,2,5,9,8,4]_{10}$, $[2, 3, 1, 3, 6, 3, 1, 3, 2]_{11}$ \\ & 492080294 & $[4,9,2,0,8,0,2,9,4]_{10}$, $[2, 3, 2, 8, 4, 8, 2, 3, 2]_{11}$ \\ & 520020025 & $[5,2,0,0,2,0,0,2,5]_{10}$, $[1, 2, 6, 1, 10, 1, 6, 2, 1]_{12}$ \\ & 537181735 & $[5,3,7,1,8,1,7,3,5]_{10}$, $[2, 5, 6, 2, 5, 2, 6, 5, 2]_{11}$ \\ & 713171317 & $[7,1,3,1,7,1,3,1,7]_{10}$, $[1, 7, 10, 10, 0, 10, 10, 7, 1]_{12}$ \\ & 796212697 & $[7,9,6,2,1,2,6,9,7]_{10}$, $[1, 10, 2, 7, 9, 7, 2, 10, 1]_{12}$ \\ & 952404259 & $[9,5,2,4,0,4,2,5,9]_{10}$, $[1, 2, 2, 4, 1, 4, 2, 2, 1]_{13}$ \\ & 998111899 & $[9,9,8,1,1,1,8,9,9]_{10}$, $[4, 7, 2, 4, 5, 4, 2, 7, 4]_{11}$ \\ & 999454999 & $[9,9,9,4,5,4,9,9,9]_{10}$, $[4, 7, 3, 1, 9, 1, 3, 7, 4]_{11}$ \\ \hline
11 & 39276067293 & $[3,9,2,7,6,0,6,7,2,9,3]_{10}$, \\ & & $[1, 5, 7, 2, 5, 3, 5, 2, 7, 5, 1]_{11}$ \\ & 39453235493 & $[3,9,4,5,3,2,3,5,4,9,3]_{10}$, \\ & & $[1, 5, 8, 0, 6, 3, 6, 0, 8, 5, 1]_{11}$ \\ & 42521012524 & $[4,2,5,2,1,0,1,2,5,2,4]_{10}$, \\ & & $[1, 7, 0, 4, 0, 0, 0, 4, 0, 7, 1]_{11}$ \\ & 73183838137 & $[7,3,1,8,3,8,3,8,1,3,7]_{10}$, \\ & & $[1, 2, 2, 2, 5, 1, 5, 2, 2, 2, 1]_{12}$ \\ \hline
13 & 1400232320041 & $[4, 8, 5, 5, 2, 1, 7, 1, 2, 5, 5, 8, 4]_9$, \\ & & $[1,4,0,0,2,3,2,3,2,0,0,4,1]_{10}$ \\ & 2005542455002 & $[7, 0, 8, 1, 5, 8, 0, 8, 5, 1, 8, 0, 7]_9$, \\ & & $[2,0,0,5,5,4,2,4,5,5,0,0,2]_{10}$ \\ & 2024099904202 & $[7, 1, 4, 4, 5, 0, 0, 0, 5, 4, 4, 1, 7]_9$, \\ & & $[2,0,2,4,0,9,9,9,0,4,2,0,2]_{10}$ \\ & 2081985891802 & $[7, 3, 3, 0, 8, 6, 4, 6, 8, 0, 3, 3, 7]_9$, \\ & & $[2,0,8,1,9,8,5,8,9,1,8,0,2]_{10}$ \\ & 4798641468974 & $[4,7,9,8,6,4,1,4,6,8,9,7,4]_{10}$, \\ & & $[1, 5, 9, 0, 1, 0, 2, 0, 1, 0, 9, 5, 1]_{11}$ \\ \hline
15 & 101904010409101 & $[4, 4, 0, 7, 2, 7, 0, 5, 0, 7, 2, 7, 0, 4, 4]_9$, \\ & & $[1,0,1,9,0,4,0,1,0,4,0,9,1,0,1]_{10}$ \\ & 149285434582941 & $[6, 4, 6, 5, 1, 5, 7, 1, 7, 5, 1, 5, 6, 4, 6]_9$, \\ & & $[1,4,9,2,8,5,4,3,4,5,8,2,9,4,1]_{10}$ \\ & 149819212918941 & $[6, 4, 8, 4, 1, 6, 5, 1, 5, 6, 1, 4, 8, 4, 6]_9$, \\ & & $[1,4,9,8,1,9,2,1,2,9,1,8,9,4,1]_{10}$ \\ & 463906656609364 & $[4,6,3,9,0,6,6,5,6,6,0,9,3,6,4]_{10}$, \\ & & $[1, 2, 4, 8, 10, 6, 7, 8, 7, 6, 10, 8, 4, 2, 1]_{11}$ \\ \hline
17 & 11111059395011111 & $[5, 8, 8, 6, 1, 8, 8, 6, 3, 6, 8, 8, 1, 6, 8, 8, 5]_9$, \\ & & $[1,1,1,1,1,0,5,9,3,9,5,0,1,1,1,1,1]_{10}$ \\ & 11199701210799111 & $[6, 0, 3, 5, 0, 7, 5, 8, 3, 8, 5, 7, 0, 5, 3, 0, 6]_9$, \\ & & $[1,1,1,9,9,7,0,1,2,1,0,7,9,9,1,1,1]_{10}$ \\ & 13577478487477531 & $[7, 2, 8, 4, 4, 7, 6, 7, 1, 7, 6, 7, 4, 4, 8, 2, 7]_9$, \\ & & $[1,3,5,7,7,4,7,8,4,8,7,4,7,7,5,3,1]_{10}$ \\ & 14802554345520841 & $[7, 8, 8, 0, 4, 4, 4, 1, 1, 1, 4, 4, 4, 0, 8, 8, 7]_9$, \\ & & $[1,4,8,0,2,5,5,4,3,4,5,5,2,0,8,4,1]_{10}$ \\ & 54470642224607445 & $[5,4,4,7,0,6,4,2,2,2,4,6,0,7,4,4,5]_{10}$, \\ & & $[1, 2, 0, 4, 9, 0, 3, 0, 7, 0, 3, 0, 9, 4, 0, 2, 1]_{11}$ \\ & 56681764446718665 & $[5,6,6,8,1,7,6,4,4,4,6,7,1,8,6,6,5]_{10}$, \\ & & $[1, 2, 6, 2, 9, 6, 1, 4, 3, 4, 1, 6, 9, 2, 6, 2, 1]_{11}$ \\ & 56831729892713865 & $[5,6,8,3,1,7,2,9,8,9,2,7,1,3,8,6,5]_{10}$, \\ & & $[1, 2, 6, 7, 2, 3, 8, 2, 3, 2, 8, 3, 2, 7, 6, 2, 1]_{11}$ \\ & 62712119691121726 & $[6,2,7,1,2,1,1,9,6,9,1,1,2,1,7,2,6]_{10}$, \\ & & $[1, 4, 0, 1, 6, 0, 1, 7, 6, 7, 1, 0, 6, 1, 0, 4, 1]_{11}$ \\ & 64224652625642246 & $[6,4,2,2,4,6,5,2,6,2,5,6,4,2,2,4,6]_{10}$, \\ & & $[1, 4, 4, 1, 3, 10, 5, 4, 2, 4, 5, 10, 3, 1, 4, 4, 1]_{11}$ \\ \hline
19 & 6411682614162861146 & $[6,4,1,1,6,8,2,6,1,4,1,6,2,8,6,1,1,4,6]_{10}$, \\ & & $[1, 1, 7, 5, 9, 10, 6, 7, 4, 6, 4, 7, 6, 10, 9, 5, 7, 1, 1]_{11}$ \\ & 7861736017106371687 & $[7,8,6,1,7,3,6,0,1,7,1,0,6,3,7,1,6,8,7]_{10}$, \\ & & $[1, 4, 6, 1, 0, 4, 5, 4, 1, 5, 1, 4, 5, 4, 0, 1, 6, 4, 1]_{11}$ \\ \hline
21 & 104618510424015816401 & $[8, 5, 4, 0, 1, 3, 3, 4, 0, 4, 1, 4, 0, 4, 3, 3, 1, 0, 4, 5, 8]_9$, \\ & & $[1,0,4,6,1,8,5,1,0,4,2,4,0,1,5,8,1,6,4,0,1]_{10}$ \\ & 686833076121670338686 & $[6,8,6,8,3,3,0,7,6,1,2,1,6,7,0,3,3,8,6,8,6]_{10}$, \\ & & $[1, 0, 2, 5, 9, 5, 4, 1, 7, 7, 4, 7, 7, 1, 4, 5, 9, 5, 2, 0, 1]_{11}$ \\ & 771341832818238143177 & $[7,7,1,3,4,1,8,3,2,8,1,8,2,3,8,1,4,3,1,7,7]_{10}$ \\ & & $[1, 1, 6, 8, 0, 7, 1, 1, 3, 10, 1, 10, 3, 1, 1, 7, 0, 8, 6, 1, 1]_{11}$ \\ & 903253059636950352309 & $[9,0,3,2,5,3,0,5,9,6,3,6,9,5,0,3,5,2,3,0,9]_{10}$ \\ & & $[1, 3, 8, 5, 0, 4, 6, 6, 7, 10, 9, 10, 7, 6, 6, 4, 0, 5, 8, 3, 1]_{11}$ \\ \hline
23 & 89403957605050675930498 & $[8,9,4,0,3,9,5,7,6,0,5,0,5,0,6,7,5,9,3,0,4,9,8]_{10}$, \\ & & $[1, 1, 0, 9, 9, 0, 10, 6, 6, 10, 6, 2, 6, 10, 6, 6, 10, 0, 9, 9, 0, 1, 1]_{11}$ \\ \hline
25 & 9986831781362631871386899 & $[9,9,8,6,8,3,1,7,8,1,3,6,2,6,3,1,8,7,1,3,8,6,8,9,9]_{10}$, \\ & & $[1, 0, 1, 7, 5, 8, 7, 5, 2, 10, 9, 3, 3, 3, 9, 10, 2, 5, 7, 8, 5, 7, 1, 0, 1]_{11}$ \\
\end{supertabular} \end{center}

\section{Future Directions}

There are a few papers in the literature which focus on palindromes in different base systems.  For example, \cite{MR1126663} considers those palindromes which are perfect squares.  \cite{MR2321593} generalizes the question by considering those which are perfect powers.  \cite{MR740761} presents some results on the number of ways an integer can be expressed as a palindrome in different bases.  In fact, we present the following problem:

\begin{quote} \textit{What is the largest list of bases $b$ for which an integer $N \geq 10$ is a $d$-digit palindrome base $b$ for every base in the list?}  \end{quote}

\noindent If one chooses $N = 66, \, 88, \, 676, \, 989$, it is easy to see that there exists a $d$-digit palindrome base 10 has at least four different bases $b$ for which it is a $d$-digit palindrome base $b$.  It is unclear whether this is an upper bound on the number of different bases.


\begin{thebibliography}{1}

\bibitem{MR2321593}
Santos Hern{\'a}ndez~Hern{\'a}ndez and Florian Luca.
\newblock Palindromic powers.
\newblock {\em Rev. Colombiana Mat.}, 40(2):81--86, 2006.

\bibitem{MR1126663}
Ivan Korec.
\newblock Palindromic squares for various number system bases.
\newblock {\em Math. Slovaca}, 41(3):261--276, 1991.

\bibitem{MR740761}
Helena Kresov{\'a} and Tibor {\v{S}}al{\'a}t.
\newblock On palindromic numbers.
\newblock {\em Acta Math. Univ. Comenian.}, 42/43:293--298 (1984), 1983.

\bibitem{MR2412782}
Florian Luca and Alain Togb{\'e}.
\newblock On binary palindromes of the form {$10\sp n\pm 1$}.
\newblock {\em C. R. Math. Acad. Sci. Paris}, 346(9-10):487--489, 2008.

\end{thebibliography}
\end{document}